\newtheorem{thm}{Theorem}
\newtheorem{lem}{Lemma}
\theoremstyle{definition}
\newcommand{\A}{{\mathcal A}}
\newcommand{\R}{{\mathcal R}}
\newcommand{\es}{{\mathcal S}}
\newcommand{\IC}{{\mathbb C}}
\newcommand{\ID}{{\mathbb D}}
\newcommand{\CC}{{\mathcal C}}
\newcommand{\D}{{\mathbb D}}
\newcommand{\real}{{\operatorname{Re}\,}}
\def\be{\begin{equation}}
\def\ee{\end{equation}}
\begin{document}
\title[Improved bounds of the third Hankel determinant]{Improved bounds of the third Hankel determinant for classes of univalent functions with bounded turning}

\author[M. Obradovi\'{c}]{Milutin Obradovi\'{c}}
\address{Department of Mathematics,
Faculty of Civil Engineering, University of Belgrade,
Bulevar Kralja Aleksandra 73, 11000, Belgrade, Serbia}
\email{obrad@grf.bg.ac.rs}
\author[N. Tuneski]{Nikola Tuneski}
\address{Department of Mathematics and Informatics, Faculty of Mechanical Engineering, Ss. Cyril and Methodius
University in Skopje, Karpo\v{s} II b.b., 1000 Skopje, Republic of North Macedonia.}
\email{nikola.tuneski@mf.edu.mk}
\author[P. Zaprawa]{Pawe{\l} Zaprawa}
\address{Faculty of Mechanical Engineering, Lublin University of Technology, Nadbystrzycka 36, 20-618 Lublin, Poland.}
\email{nikola.tuneski@mf.edu.mk}


\subjclass{30C45, 30C50}
\keywords{analytic, univalent, Hankel determinant, upper bound, bounded turning.}

\begin{abstract}
In this paper we improve the bounds of the third order Hankel determinant for two classes of univalent functions with bounded turning.
\end{abstract}


\maketitle

\section{Introduction and preliminaries}

\medskip

Univalent functions, that are functions which are analytic, one-on-one and onto a certain domain, play a significant role in geometric function theory and in complex analysis in general. In spite the main problem in the area, the Bieberbach conjecture, was closed by de Branges in 1984 the theory of univalent functions still remains attractive. A concept from this theory that was recently rediscovered and finds its application in the theory of singularities (see \cite{dienes}) and in the study of power series with integral coefficients is the Hankel determinant of functions $f(z)=z+a_2z^2+a_3z^3+\cdots$ analytic in the unit disk $\ID := \{ z\in \IC:\, |z| < 1 \}$, for $q\geq 1$ and $n\geq 1$ defined by
\[
        H_{q}(n) = \left |
        \begin{array}{cccc}
        a_{n} & a_{n+1}& \ldots& a_{n+q-1}\\
        a_{n+1}&a_{n+2}& \ldots& a_{n+q}\\
        \vdots&\vdots&~&\vdots \\
        a_{n+q-1}& a_{n+q}&\ldots&a_{n+2q-2}\\
        \end{array}
        \right |.
\]
The class of all such functions is denoted by $\A$.

\medskip

The upper bound (preferebly sharp) of the modulus  of the Hankel determinants is extensively studied in recent time, mainly the second order case, $H_{2}(2)= a_2a_4-a_{3}^2$, and the third order case,
\[ H_3(1) =  \left |
        \begin{array}{ccc}
        1 & a_2& a_3\\
        a_2 & a_3& a_4\\
        a_3 & a_4& a_5\\
        \end{array}
        \right | = a_3(a_2a_4-a_{3}^2)-a_4(a_4-a_2a_3)+a_5(a_3-a_2^2).
\]

\medskip

This problem, as most others over the class of univalent functions is difficult to tackle with for the general class, and instead its subclasses are studied. The best known result for the whole class is the one of Hayman (\cite{hayman-68}) who showed that $|H_2(n)|\le An^{1/2}$, where $A$ is an absolute constant, and that this rate of growth is the best possible. For the subclasses, we list the results for the classes of starlike and convex functions
\[ \mathcal{S}^\ast = \left\{ f\in\A:\real\frac{zf'(z)}{f(z)}>0,\, z\in\D\right\}  \]
and
\[ \mathcal{C} = \left\{ f\in\A:\real\left[1+\frac{zf''(z)}{f'(z)}\right]>0,\,0 z\in\D\right\},  \]
with the upper bound of the second Hankel determinant  1 and $1/8$ (\cite{janteng-07}), and  of the third Hankel determinant $0.777987\ldots$ (\cite{MONT-2019-3}) and $\frac{4}{135}=0.0296\ldots$ (\cite{Kowalczyk-18}), respectively. The estimates for the second order determinant are sharp, while of the third order are not, but are best known.

\medskip

Other related results can be found in \cite{ind-1,Kowalczyk-18,MONT-2018-1,DTV-book,krishna-1}.

\medskip

In this paper we will study the class $\R\subset\A$ of univalent  functions satisfying
\be\label{e1}
\real f'(z)>0 \quad\quad (z\in\D),
\ee
and the class $\R_1\subset\A$ satisfying
\[  \real \{f'(z)+zf''(z)\}>0 \quad\quad (z\in\D). \]
The functions from the class $\R$ are said to be of bounded turning since $\real f'(z)>0$ is equivalent to $|\arg f'(z)|<\pi/2$, and $\arg f(z)$ is the angle of rotation of the image of a line segment starting from $z$ under the mapping $f$. They are of special interest since they are not part of class of starlike  functions which is very wide subclass of univalent functions. This is due to the counterexample by Krzy\.z (\cite{krzyz}) showing that $\es^\ast$ does not contain $\R$, and $\R$ does not contain $\es^\ast$.
In addition,  classes $\R$ and $\R_1$ are related in the same way as the classes of starlike and convex functions, i.e., $\R_1\subset \R$ (\cite{ali}) as $\CC\subset\es^\ast$, and
\[ f\in\R_1 \quad\quad \Leftrightarrow \quad\quad zf'(z)\in\R, \]
as
\[ f\in \CC\quad\quad \Leftrightarrow\quad\quad  zf'(z)\in\es^\ast . \]

\medskip

For the class $\R$ in \cite{janteng-06} the authors showed that
\[|H_2(2)|\le \frac{4}{9}  = 0.444\ldots,\]
and in \cite{khatter} (with $\alpha=1$ in Corollary 2.8),
\[|H_3(1)|\le \frac{1}{540} \left(\frac{877}{3}+25 \sqrt{5}\right) = 0.64488\ldots.\]
While the first estimate is sharp, the second one isn't and we improve it here. We also give an upper bound of $H_3(1)$ for the class $R_1$.

\medskip

For the study we use a different approach than the common one. In the current research on the upper bound of the Hankel determinant dominates a method based on a result on coefficients of Carath\'{e}odory functions
(functions from with positive real part on the unit disk)  involving Toeplitz determinants. This result is due to Carath\'{e}odory and Toeplitz (\cite[Theorem 3.1.4, p.26]{DTV-book}) and its proof can be found in Grenander and Szeg\H{o} (\cite{granader}).

\medskip

In this paper we use different method, based on the estimates of the coefficients of  Schwartz functions. Here, it is a part of that result needed for the proofs.

\begin{lem}\label{lem-prok}
Let $\omega(z)=c_{1}z+c_{2}z^{2}+\cdots $ be a Schwarz function. Then, for any real numbers $\mu$ and $\nu$ such that $ (\mu,\nu) \in D_1\cup D_2$, where
\[ D_1 = \left\{(\mu,\nu):|\mu|\le\frac12,\, -1\le\nu\le1 \right\}\]
and
\[ D_2 = \left\{(\mu,\nu):\frac12\le|\mu|\le2,\, \frac{4}{27}(|\mu|+1)^3-(|\mu|+1)\le\nu\le1 \right\}, \]
the following sharp estimate holds
$$\left|c_{3}+\mu c_{1}c_{2}+\nu c_{1}^{3}\right|\leq 1.$$
\end{lem}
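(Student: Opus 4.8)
The plan is to normalize the leading coefficient by a rotation, parametrize the remaining Schwarz coefficients through Schur's algorithm, and thereby reduce the claim to an elementary (if delicate) optimization over two nonnegative real parameters and one angle.

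\emph{Normalization and parametrization.} First I would observe that for any real $\alpha$ the function $\omega_\alpha(z)=\omega(e^{i\alpha}z)$ is again a Schwarz function, with coefficients $c_k e^{ik\alpha}$; hence $c_1\mapsto c_1 e^{i\alpha}$ while the functional $c_3+\mu c_1 c_2+\nu c_1^3$ is merely multiplied by $e^{3i\alpha}$. Choosing $\alpha$ so that $c_1 e^{i\alpha}=|c_1|$ leaves the modulus of the functional unchanged, so there is no loss in assuming $c_1=x\in[0,1]$. Next, applying Schur's algorithm to $\omega(z)/z$ gives the standard representation
\[ c_2=(1-x^2)\zeta, \qquad c_3=(1-x^2)\big[(1-|\zeta|^2)\eta-x\zeta^2\big], \]
with $|\zeta|\le 1$ and $|\eta|\le 1$. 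Substituting and using $|\eta|\le 1$ to absorb the $\eta$-term by the triangle inequality, the assertion reduces to establishing
\[ (1-x^2)(1-|\zeta|^2)+\big|\nu x^3+\mu x(1-x^2)\zeta-x(1-x^2)\zeta^2\big|\le 1 \]
for all $x\in[0,1]$ and all $|\zeta|\le1$.

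\emph{Optimization.} Writing $\zeta=t e^{i\theta}$ with $t\in[0,1]$ and expanding, the inner modulus squared becomes a quadratic in $u=\cos\theta$ whose coefficients are explicit polynomials in $x,t$ and in $\mu,\nu$. I would maximize this quadratic over $u\in[-1,1]$ — the maximum sitting either at an endpoint or at the vertex — and then carry out the remaining maximization over $t$ and $x$. The expected main obstacle is precisely this case analysis: the two regions $D_1$ and $D_2$, together with the separating curve $\nu=\tfrac{4}{27}(|\mu|+1)^3-(|\mu|+1)$, should emerge as the exact threshold distinguishing whether the extremum in the angular (and then the radial) variable is attained in the interior or on the boundary of the parameter range. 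A careless split of cases here, or overlooking that the vertex of the quadratic may leave the admissible interval as $x,t$ vary, is where the argument is most likely to break.

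\emph{Sharpness.} Finally I would confirm that the constant $1$ cannot be lowered by exhibiting extremal functions. The choice $\omega(z)=z^3$ (so $c_1=c_2=0$, $c_3=1$) already forces equality on the part of each region where the maximizing configuration has $x=0$; on the remaining boundary pieces one tests Schwarz functions carrying a single nontrivial Schur parameter, chosen so that the triangle-inequality step (alignment of $\eta$) and the angular optimization (optimal $\theta$) are simultaneously tight. Matching these extremal configurations against the thresholds found in the optimization step is what certifies that the estimate is sharp throughout $D_1\cup D_2$.
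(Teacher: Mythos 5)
First, a point of reference: the paper itself does not prove this lemma at all --- it is quoted as a known result (the label and the bibliography point to Prokhorov and Szynal), so there is no internal proof to compare against; the natural benchmark is the original Prokhorov--Szynal argument, whose opening steps your plan reproduces. Those steps of yours are correct. The functional is homogeneous of degree $3$ under $\omega(z)\mapsto\omega(e^{i\alpha}z)$, so one may indeed assume $c_1=x\in[0,1]$; the Schur-type parametrization $c_2=(1-x^2)\zeta$, $c_3=(1-x^2)\bigl[(1-|\zeta|^2)\eta-x\zeta^2\bigr]$ with $|\zeta|\le1$, $|\eta|\le1$ is the standard one; and absorbing $\eta$ by the triangle inequality is even lossless, since $\sup_{|\eta|\le1}|A\eta+B|=A+|B|$ for $A=(1-x^2)(1-|\zeta|^2)\ge0$, so your reduced inequality is exactly equivalent to the lemma.

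The genuine gap is that everything after this reduction --- which is the entire mathematical content of the lemma --- is only announced, never carried out. Proving that
\[
(1-x^2)(1-t^2)+\bigl|\nu x^3+\mu x(1-x^2)te^{i\theta}-x(1-x^2)t^2e^{2i\theta}\bigr|\le 1
\]
for all $x,t\in[0,1]$ and all $\theta$, precisely when $(\mu,\nu)\in D_1\cup D_2$, is a delicate three-variable optimization whose case analysis is what generates the boundary curve $\nu=\tfrac{4}{27}(|\mu|+1)^3-(|\mu|+1)$; you yourself flag this step as ``where the argument is most likely to break'', but you never resolve it. Until the vertex-versus-endpoint analysis of the quadratic in $\cos\theta$ and the subsequent maximization over $t$ and $x$ are executed explicitly, region by region, the proposal is a plausible roadmap rather than a proof. (By contrast, your sharpness discussion is more elaborate than needed: the lemma only asserts that the constant $1$ is attained, and $\omega(z)=z^3$, giving $c_1=c_2=0$, $c_3=1$, already does this for every $(\mu,\nu)$.)
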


We will also use the following,  almost forgotten result of Carleson (\cite{carlson}).

\begin{lem}\label{lem-carl}
Let $\omega(z)=c_{1}z+c_{2}z^{2}+\cdots $ be a Schwarz function. Then
\[|c_2|\le1-|c_1|^2 \quad\mbox{and}\quad |c_4|\le1-|c_1|^2 -|c_2|^2. \]
\end{lem}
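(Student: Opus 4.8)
The plan is to base everything on the Schwarz lemma: since $\omega$ vanishes at the origin and is bounded by $1$, the function $p(z)=\omega(z)/z=c_1+c_2z+c_3z^2+c_4z^3+\cdots$ maps $\ID$ into $\overline{\ID}$. The first inequality is then just the Schwarz--Pick estimate for $p$ at the origin, $|p'(0)|\le 1-|p(0)|^2$, which reads $|c_2|\le 1-|c_1|^2$; equivalently, one step of the Schur (Carath\'eodory) reduction
\[ q(z)=\frac1z\cdot\frac{p(z)-c_1}{1-\overline{c_1}\,p(z)} \]
produces an analytic self-map of $\ID$ whose value at the origin is $q(0)=c_2/(1-|c_1|^2)$, of modulus at most $1$. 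The degenerate case $|c_1|=1$ forces $\omega(z)=c_1z$, where both bounds are trivial.

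For the second inequality I would strip off $c_1$ in the same way. Writing $p(z)=\bigl(c_1+zq(z)\bigr)/\bigl(1+\overline{c_1}\,zq(z)\bigr)$ with $q$ bounded by $1$ and $\gamma_2:=q(0)=c_2/(1-|c_1|^2)$, and expanding to order three, one gets the closed identity
\[ c_4=(1-|c_1|^2)\bigl(q_2-2\overline{c_1}\,\gamma_2 q_1+\overline{c_1}^{2}\gamma_2^3\bigr), \]
where $q_1,q_2$ are the next two Taylor coefficients of $q$. Since $|c_2|^2=(1-|c_1|^2)^2|\gamma_2|^2$, dividing by $1-|c_1|^2$ shows the target $|c_4|\le 1-|c_1|^2-|c_2|^2$ is equivalent to $\bigl|q_2-2\overline{c_1}\,\gamma_2 q_1+\overline{c_1}^{2}\gamma_2^3\bigr|\le 1-(1-|c_1|^2)|\gamma_2|^2$. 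I would then inject the structure of $q$ by running one more reduction on it, obtaining $q_1=(1-|\gamma_2|^2)\gamma_3$ and $q_2=(1-|\gamma_2|^2)\bigl((1-|\gamma_3|^2)\gamma_4-\overline{\gamma_2}\gamma_3^2\bigr)$ with free parameters $\gamma_3,\gamma_4\in\overline{\ID}$, and maximize trivially over $\gamma_4$ since its coefficient $(1-|\gamma_2|^2)(1-|\gamma_3|^2)$ is nonnegative.

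The hard part is the residual estimate, and it \emph{cannot} be done by the triangle inequality: bounding $|q_1|$ and $|q_2|$ separately by $1-|\gamma_2|^2$ overshoots the target, precisely because $q_1$ and $q_2$ are coupled through the common parameter $\gamma_3$. After the reduction above, the claim collapses to the one‑variable extremal problem
\[ \max_{|x|=1}\bigl|(1-g^2)t^2x^2+2(1-g^2)rt\,x-r^2g^2\bigr|\le \frac{r^2g^2+(1-g^2)t^2}{g}, \]
where $r=|c_1|$, $g=|\gamma_2|$, $t=|\gamma_3|$ and $x$ collects the relative phase. Writing the squared modulus as a concave quadratic in $\cos\theta$ and maximizing is elementary but delicate; I expect equality to occur for $\gamma_2$ unimodular, matching the extremal map $\omega(z)=z(a+z)/(1+\overline{a}z)$, for which $c_1=a$, $c_2=1-|a|^2$ and $c_4=\overline{a}^{2}(1-|a|^2)$ meet the bound with equality (the degenerate $|\gamma_2|=1$ being handled by continuity). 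This single quadratic optimization is the one genuinely delicate point; everything else is the standard Schwarz‑lemma machinery.
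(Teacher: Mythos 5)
Your overall plan is sound, and most of it checks out: the Schwarz--Pick step for $|c_2|\le 1-|c_1|^2$ is correct; the Schur-reduction identities $c_4=(1-|c_1|^2)\bigl(q_2-2\overline{c_1}\gamma_2 q_1+\overline{c_1}^{2}\gamma_2^{3}\bigr)$, $q_1=(1-|\gamma_2|^2)\gamma_3$, $q_2=(1-|\gamma_2|^2)\bigl((1-|\gamma_3|^2)\gamma_4-\overline{\gamma_2}\gamma_3^{2}\bigr)$ are correct; your observation that the plain triangle inequality overshoots is correct; and the three phases really do collapse into powers of a single unimodular $x$, so your one-variable reduction is exactly right. (For comparison: the paper offers no proof at all of this lemma --- it is quoted from Carlson's 1940 paper --- so any self-contained argument goes beyond the source.) The genuine gap is that the step you yourself call ``the one genuinely delicate point'' is asserted rather than proven: you state the inequality $\max_{|x|=1}\bigl|(1-g^2)t^2x^2+2(1-g^2)rt\,x-r^2g^2\bigr|\le\bigl(r^2g^2+(1-g^2)t^2\bigr)/g$ and then say you ``expect'' equality in a certain configuration. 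An expectation is not an argument; without this estimate nothing has been proven about $|c_4|$, so as written the proof is incomplete at its crux.

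The gap is fillable, and your own suggested method (concave quadratic in $\cos\theta$) does work; here is the missing computation. Dispose of $g=0$ first: then every term of the residual expression carries a factor $\gamma_2$, so it vanishes and the bound is trivial. For $g>0$ set $A=(1-g^2)t^2$, $C=2r(1-g^2)t$, $D=r^2g^2$ and $s=\cos\theta$, $x=e^{i\theta}$; then
\[
\bigl|Ax^2+Cx-D\bigr|^2=(A+D)^2+C^2+2C(A-D)s-4ADs^2,
\]
a concave quadratic in $s$. The key identity, which is what makes everything close up, is
\[
C^2=4AD\,\frac{1-g^2}{g^2},
\]
both sides being $4r^2(1-g^2)^2t^2$. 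If the vertex $s^*=C(A-D)/(4AD)$ lies in $[-1,1]$, the maximum equals $(A+D)^2+C^2\frac{(A+D)^2}{4AD}=(A+D)^2\bigl(1+\tfrac{1-g^2}{g^2}\bigr)=\frac{(A+D)^2}{g^2}$, i.e.\ the bound holds with equality. If the vertex lies outside, the maximum is attained at $s=\pm1$ and equals $(A+D)^2+C^2+2|C|\,|A-D|-4AD$; subtracting this from $(A+D)^2/g^2$ and using the key identity together with $|C|=2\sqrt{AD}\,\sqrt{1-g^2}/g$ gives
\[
\frac{(A+D)^2}{g^2}-\Bigl[(A+D)^2+C^2+2|C|\,|A-D|-4AD\Bigr]=\Bigl(|A-D|\,\tfrac{\sqrt{1-g^2}}{g}-2\sqrt{AD}\Bigr)^2\ge 0.
\]
With this inserted, your argument becomes a complete and correct proof of the lemma; your extremal example $\omega(z)=z(a+z)/(1+\overline{a}z)$ (where $q\equiv 1$, so the reduction degenerates at the first step) confirms sharpness.
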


\medskip

\section{Main results}

\smallskip

First we give the sharp estimate of the third Hankel determinant for the class $\R$.

\begin{thm}\label{main-thm}
Let $f\in\R$ is of the form $f(z)=z+a_2z^2+a_3z^3+\cdots$. Then
\[ |H_3(1)| \le \frac{207}{540}=0.38333\ldots .\]
\end{thm}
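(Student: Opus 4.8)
The plan is to pass from $f$ to the Schwarz function attached to $f'$ and to reduce the estimate to an optimization over a compact planar region. Since $\real f'(z)>0$ and $f'(0)=1$, the function $f'$ is a Carath\'{e}odory function, so there is a Schwarz function $\omega(z)=c_1z+c_2z^2+\cdots$ with
\[ f'(z)=\frac{1+\omega(z)}{1-\omega(z)}=1+2\omega(z)+2\omega(z)^2+2\omega(z)^3+\cdots. \]
Matching this against $f'(z)=1+2a_2z+3a_3z^2+4a_4z^3+5a_5z^4+\cdots$ gives
\[ a_2=c_1,\qquad a_3=\frac23\left(c_1^2+c_2\right),\qquad a_4=\frac12\left(c_1^3+2c_1c_2+c_3\right), \]
\[ a_5=\frac25\left(c_1^4+3c_1^2c_2+c_2^2+2c_1c_3+c_4\right). \]
Substituting into $H_3(1)=2a_2a_3a_4-a_3^3-a_4^2+a_3a_5-a_2^2a_5$ turns the problem into bounding a polynomial in $c_1,c_2,c_3,c_4$. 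Since $\R$ is invariant under $f(z)\mapsto e^{-i\theta}f(e^{i\theta}z)$, which fixes $|H_3(1)|$ while sending $c_k\mapsto e^{ik\theta}c_k$, I may assume $c_1=c\in[0,1]$.

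The decisive structural point, which I would establish by a direct (if tedious) expansion, is that $c_4$ occurs only linearly and $c_3$ only quadratically, with all the top coefficients carrying the common factor $2c_2-c^2$:
\[ H_3(1)=P(c,c_2)+\frac{1}{10}\,c\left(2c_2-c^2\right)c_3-\frac14\,c_3^2+\frac{2}{15}\left(2c_2-c^2\right)c_4, \]
where $P(c,c_2)=H_3(1)\big|_{c_3=c_4=0}$ is an explicit polynomial in $c$ and $c_2$. This clean separation is exactly what makes the two auxiliary lemmas applicable: Lemma \ref{lem-carl} controls the top coefficient through $|c_4|\le 1-c^2-|c_2|^2$, and Lemma \ref{lem-prok}, taken at the admissible point $(\mu,\nu)=(0,0)\in D_1$, gives $|c_3|\le 1$.

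Applying the triangle inequality together with these two bounds yields
\[ |H_3(1)|\le |P(c,c_2)|+\frac{1}{10}\,c\left|2c_2-c^2\right|+\frac14+\frac{2}{15}\left|2c_2-c^2\right|\left(1-c^2-|c_2|^2\right). \]
Writing $c_2=t e^{i\varphi}$ with $t=|c_2|\le 1-c^2$, the next step is to eliminate the argument $\varphi$: one shows that the right-hand side is largest in the worst orientation of $c_2$ (I expect $c_2$ real, namely $c_2=-t$, so that $|2c_2-c^2|=2t+c^2$), after which everything becomes a function $\Psi(c,t)$ of the two real variables $c,t$ on the triangle $\{0\le c\le 1,\ 0\le t\le 1-c^2\}$. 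The conclusion then follows by maximizing $\Psi$ over this compact set and checking that $\max\Psi=207/540$.

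The main obstacle will be twofold. First, the presence of the genuine quadratic term $-\frac14 c_3^2$ prevents me from using Lemma \ref{lem-prok} at its full strength on a single linear combination $c_3+\mu c_1c_2+\nu c_1^3$; I must instead split off the quadratic part and estimate it separately via $|c_3|\le 1$, and I will need to verify that this splitting does not overshoot the target constant. Second, controlling $|P(c,c_2)|$ simultaneously with the factor $|2c_2-c^2|$ as the argument of $c_2$ varies, and then carrying out the ensuing two-variable maximization (interior critical points versus the two edges $t=0$ and $t=1-c^2$), is where the precise value $207/540=23/60$ must emerge. Finally, to justify the word \emph{sharp} in the statement, I would exhibit the extremal Schwarz function at which all the inequalities above are attained simultaneously and verify that it indeed realizes the bound.
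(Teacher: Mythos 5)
Your setup is sound and your algebraic decomposition is correct: one does have
\[
540\,H_3(1)=\left(-7c_1^6-12c_1^4c_2+60c_1^2c_2^2-16c_2^3\right)+54c_1\left(2c_2-c_1^2\right)c_3-135c_3^2+72\left(2c_2-c_1^2\right)c_4,
\]
which is exactly your formula, and the rotation normalization $c_1=c\ge0$ is legitimate. The genuine gap is the one you flagged yourself as a risk: estimating the linear $c_3$-term via $|c_3|\le1$ (Lemma \ref{lem-prok} at the trivial point $(\mu,\nu)=(0,0)$, i.e.\ just the Schwarz inequality) and the quadratic term separately by $\tfrac14|c_3|^2\le\tfrac14$ \emph{does} overshoot the target constant, so the final two-variable maximization cannot close. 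Concretely, take $c_1=\tfrac12$, $c_2=-\tfrac12$ (admissible, since $|c_2|\le1-|c_1|^2=\tfrac34$) and $|c_3|=1$; then the right-hand side of your inequality equals
\[
\frac{1}{540}\cdot\frac{385}{64}+\frac{1}{16}+\frac14+\frac{1}{12}\approx0.4070>\frac{207}{540}=0.3833\ldots,
\]
and other points are worse still (e.g.\ $c_1=0.7$, $c_2=-0.45$, $|c_3|=1$ gives $\approx0.419$). So $\max\Psi$ is strictly larger than $207/540$, and no care in carrying out the optimization can repair this: with these estimates you would only prove a bound of roughly $0.42$.

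The paper avoids exactly this loss by never letting the term $54c_1\left|2c_2-c_1^2\right|\,|c_3|$ appear. It splits $-135c_3^2=-54c_3^2-81c_3^2$ and absorbs the \emph{entire} linear part into the first piece, writing $-54c_3\left(c_3-2c_1c_2+c_1^3\right)-81c_3^2$, and then invokes Lemma \ref{lem-prok} at the nontrivial point $(\mu,\nu)=(-2,1)\in D_2$ to get $\left|c_3-2c_1c_2+c_1^3\right|\le1$. This bounds the whole $c_3$-contribution by $54|c_3|+81|c_3|^2\le135$ \emph{uniformly} in $c_1,c_2$, which is precisely what your version lacks. (The paper then still needs a case distinction, $|c_2|\le\tfrac12\left(1-|c_1|^2\right)$ versus $\tfrac12\left(1-|c_1|^2\right)<|c_2|\le1-|c_1|^2$, to control the $c_4$-term through Lemma \ref{lem-carl}: in the first case the constant $207/540$ comes out at $|c_3|=1$, in the second a polynomial maximization gives a strictly smaller value.) The fix for your argument is therefore not a finer optimization but a different use of Lemma \ref{lem-prok}: the ``clean separation'' you identified is exactly what makes the choice $\mu=-2$, $\nu=1$ available, and that choice is the heart of the proof. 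One last small point: the theorem as stated does not assert sharpness, so no extremal function needs to be exhibited.
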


\begin{proof}
The condition \eqref{e1} is equivalent to
\[
f'(z) = \frac{1+\omega(z)}{1-\omega(z)},
\]
i.e.
\be\label{e4}
 f'(z) [1-\omega(z)] = 1+\omega(z) ,
\ee
where $\omega$ is analytic in $\D$, $\omega(0)=0$ and $|\omega(z)|<1$ for all $z$ in $\D$. If
\[  \omega(z)= c_1z+c_2z^2+\cdots,  \]
then, by equating the coefficients in \eqref{e4}, we have
\be\label{e6}
\begin{split}
a_2 &= c_1,\\
a_3 &= \frac23(c_1^2+c_2),\\
a_4 &= \frac12(c_3+2c_1c_2+c_1^3),\\
a_5 &= \frac25(c_4+2c_1c_3+3c_1^2c_2+c_1^4+c_2^2).
\end{split}
\ee
Using \eqref{e6} we have
\[
\begin{split}
H_3(1) &=
\frac{1}{540}  \left(-12  c_1 ^4  c_2 -16  c_2 ^3-54  c_1 ^3  c_3 +108  c_1   c_2   c_3 \right.\\
&\quad \left.-135  c_3 ^2+  60 c_1^2 c_2 ^2 -7  c_1 ^6 - 72c_1^2 c_4 +144  c_2   c_4 \right) \\
&=\frac{1}{540}  \Bigg[ -54c_3 \left(c_3-2c_1c_2+c_1^3\right) -81 c_3 ^2 -12  c_1 ^4  c_2 \\
&\quad -16  c_2 ^3  +  60 c_1^2 c_2 ^2 -7  c_1 ^6 + 72(2c_2-c_1^2)c_4 \Bigg],
\end{split}
\]
and
\be\label{e7}
\begin{split}
|H_3(1)| &\le
\frac{1}{540}  \Bigg[54|c_3| \left|c_3-2c_1c_2+c_1^3\right| +81  |c_3| ^2 + 12  |c_1| ^4 | c_2| \\
&\quad +16 | c_2| ^3  +  60| c_1|^2 |c_2| ^2 + 7  |c_1| ^6 +72\left(2|c_2|+|c_1|^2\right)|c_4| \Bigg].
\end{split}
\ee

If we apply  $\left|c_3-2c_1c_2+c_1^3\right| \le1$ (the case when $\mu=-2$, $\nu=1$ and $(\mu,\nu)\in D_2$ in Lemma \ref{lem-prok}),  from \eqref{e7} we get
\be\label{e8}
\begin{split}
|H_3(1)| &\le
\frac{1}{540}  \Bigg[54|c_3|+81|c_3| ^2  +12  |c_1| ^4 | c_2| +16 | c_2| ^3 \\
&\quad  +  60| c_1|^2 |c_2| ^2 + 7  |c_1| ^6 + 72\left(2|c_2|+|c_1|^2\right)|c_4| \Bigg].
\end{split}
\ee

Assume that $|c_2|\leq \tfrac12 (1-|c_1|^2)$. Hence, $2|c_2|+|c_1|^2\leq 1$. From this inequality and Lemma \ref{lem-carl},
\[
\begin{split}
|H_3(1)| &\le
\frac{1}{540}  \Bigg[54|c_3|+81|c_3| ^2  +12  |c_1| ^4 | c_2| +16 | c_2| ^3  +  60| c_1|^2 |c_2| ^2 \\
&\quad +7  |c_1| ^6 + 72\left(1-|c_1|^2-|c_2|^2\right) \Bigg]\ ,
\end{split}
\]
so
\[
\begin{split}
|H_3(1)| &\le
\frac{1}{540}  \Bigg[ 72+54|c_3|+81|c_3|^2+16|c_2|^2(|c_2|-1)+56|c_2|^2(|c_1|^2-1) \\
&\quad  +4|c_1|^2(|c_2|^2-1)+ 7|c_1|^2(|c_1|^4-1)+12|c_1|^2(|c_1|^2|c_2|-1)-49|c_1|^2\Bigg] \\
&\le \frac{1}{540}  ( 72 +54|c_3| +81|c_3|^2),
\end{split}
\]
since all other terms are less or equal to zero.

Providing that $|c_2|\leq \tfrac12 (1-|c_1|^2)$ we have
\[  |H_3(1)| \le  \frac{207}{540}=0.38333\ldots.   \]

Now, assume that $\tfrac12 (1-|c_1|^2)<|c_2|\leq (1-|c_1|^2)$. Applying Lemma \ref{lem-carl} in (\ref{e8}),
\[
\begin{split}
|H_3(1)| &\le
\frac{1}{540}  \Bigg[54|c_3|+81|c_3| ^2  +12  |c_1| ^4 | c_2| +16 | c_2| ^3  +  60| c_1|^2 |c_2| ^2 \\
&\quad +7  |c_1| ^6 + 72(2|c_2|+|c_1|^2)\left(1-|c_1|^2-|c_2|^2\right) \Bigg] .
\end{split}
\]
From our assumption it follows that $2|c_2|+|c_1|^2> 1$, so
\[
\begin{split}
|H_3(1)| &\le  54|c_3| + 81|c_3| ^2  + h_1(|c_1|^2,|c_2|),
\end{split}
\]
where
\[h_1(x,y) = 7x^3-72x^2+72x+12x^2y-12xy^2-144xy-128y^3+144y\ ,\]
$(x,y)\in D$ and $D$ is such that $x+2y>1$, $x+y\leq 1$  and $x\geq 0$.

But $-12xy^2\leq 0$ and $7x^3\leq 7$, so
\[h_1(x,y) < g_1(x,y) = -128y^3+(144-144x+12x^2)y-72x^2+72x+7 .\]
It is enough to derive the greatest value of $g_1$ (even in the square $[0,1]\times[0,1]$). The critical points of $g_1$ satisfy the system of equations
\[\begin{cases}
(x-6)y+3-6x = 0\\
-32y^2+(12-12x+x^2) = 0\ .
\end{cases}\]
The first equation is contradictory if $x\in(1/2,1]$. Suppose that $x\in[0,1/2]$. From this equation $y=\frac{6x-3}{x-6}$. Putting it into the second one we obtain
\[12-12x+x^2-32\left(\frac{6x-3}{x-6}\right)^2 = 0\ ,\]
or equivalently
\[144+480x(1-2x)+6x(1-4x^2)+90x+x^4 = 0\ ,\]
which has no solutions in $[0,1/2]$.

On the boundary of the square $[0,1]\times[0,1]$ there is
\[
\begin{split}
g_1(x,0) &=7+72x-72x^2\leq 25\ , \\
g_1(x,1) &=23-72x-60x^2\leq 23\ ,\\
g_1(1,y) &=7+12y-128y^3\leq 7+\sqrt2\ ,\\
g_1(0,y) &=7+144y-128y^3\leq 7+24\sqrt6=65.787\ldots\ .
\end{split}
\]
This means that in this case,
\[H_3(1) \leq \frac{1}{540} \left(135+65.787\ldots\right) <\frac{207}{540}\ .\]

Summing up, $|H_3(1)| \le  \frac{207}{540}$.
\end{proof}

\medskip

Now we give the estimate of the third Hankel determinant for the class $\R_1$.

\medskip

\begin{thm}\label{th2}
Let $f\in\R_1$ and is of the form $f(z)=z+a_2z^2+a_3z^3+\cdots$. Then
\[ |H_3(1)| \le \frac{3537}{129600}= 0.02729\ldots\ . \]
\end{thm}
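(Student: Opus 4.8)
The plan is to follow the same coefficient-based strategy that worked for Theorem~\ref{main-thm}, but now starting from the defining condition of $\R_1$. Since $f\in\R_1$ means $\real\{f'(z)+zf''(z)\}>0$, I would first write $f'(z)+zf''(z)=\frac{1+\omega(z)}{1-\omega(z)}$ for a Schwarz function $\omega(z)=c_1z+c_2z^2+\cdots$, clear denominators as in \eqref{e4}, and equate coefficients. The key observation is that $f'(z)+zf''(z)=(zf'(z))'$, so if $g(z)=zf'(z)=z+\sum_{n\ge2}na_nz^n$ then $g\in\R$; writing $g(z)=z+b_2z^2+b_3z^3+\cdots$ gives $b_n=na_n$, and the relations \eqref{e6} apply verbatim to the $b_n$. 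Thus $a_n=b_n/n$ with $b_2=c_1$, $b_3=\tfrac23(c_1^2+c_2)$, $b_4=\tfrac12(c_3+2c_1c_2+c_1^3)$, $b_5=\tfrac25(c_4+2c_1c_3+3c_1^2c_2+c_1^4+c_2^2)$, so the $a_n$ each pick up an extra factor $1/n$ relative to the $\R$ case.

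Next I would substitute these expressions into $H_3(1)=a_3(a_2a_4-a_3^2)-a_4(a_4-a_2a_3)+a_5(a_3-a_2^2)$ and expand into a polynomial in $c_1,c_2,c_3,c_4$. The extra denominators $1/2,1/3,1/4,1/5$ will make the overall normalizing constant much larger (this is where the $129600$ comes from, versus $540$ for $\R$), which is exactly why the final bound is roughly an order of magnitude smaller. I would then reorganize the expression to isolate a term of the form $c_3\bigl(c_3-2c_1c_2+c_1^3\bigr)$, apply the triangle inequality, and use the sharp estimate $|c_3-2c_1c_2+c_1^3|\le1$ from Lemma~\ref{lem-prok} (with $\mu=-2,\nu=1\in D_2$), exactly as in \eqref{e8}. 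This produces an upper bound for $|H_3(1)|$ that is a polynomial in $|c_1|,|c_2|,|c_3|,|c_4|$.

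From there I would repeat the case split on $|c_2|$ relative to $\tfrac12(1-|c_1|^2)$. In the first case $2|c_2|+|c_1|^2\le1$, I would apply $|c_4|\le1-|c_1|^2-|c_2|^2$ from Lemma~\ref{lem-carl} and argue that all the mixed terms in $|c_1|,|c_2|$ are nonpositive, reducing the estimate to something of the form $\text{const}+(\text{coeff})\,|c_3|+(\text{coeff})\,|c_3|^2$; since $|c_3|\le1$ this yields a clean numerical bound. In the second case $\tfrac12(1-|c_1|^2)<|c_2|\le1-|c_1|^2$, I would again insert $|c_4|\le1-|c_1|^2-|c_2|^2$, bound the cubic term $-(\text{coeff})|c_1||c_2|^2$ and $|c_1|^6$ crudely, and reduce to maximizing a two-variable function $g_1(x,y)$ (with $x=|c_1|^2$, $y=|c_2|$) over the unit square by checking interior critical points and the four edges.

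The main obstacle I expect is the bookkeeping in this final optimization: the polynomial $H_3(1)$ for $\R_1$ has the same monomials as for $\R$ but with different rational coefficients, so the auxiliary function $g_1$ and its critical-point system will be numerically different, and I will need the interior critical-point system to again have no admissible solution so that the maximum is attained on the boundary. Verifying that the edge maxima and the first-case bound both stay below $\tfrac{3537}{129600}$ is the delicate step—if the simple estimates $-12xy^2\le0$, $7x^3\le7$ that sufficed for $\R$ turn out to be too lossy here, I may need sharper intermediate bounds on the individual terms to close the gap to the stated constant.
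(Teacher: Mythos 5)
Your setup is fine: the observation that $f\in\R_1$ if and only if $zf'\in\R$ gives exactly the coefficient formulas the paper obtains by direct matching in \eqref{e6-2}, namely $a_2=\tfrac{c_1}{2}$, $a_3=\tfrac29(c_1^2+c_2)$, $a_4=\tfrac18(c_3+2c_1c_2+c_1^3)$, $a_5=\tfrac{2}{25}(c_4+2c_1c_3+3c_1^2c_2+c_1^4+c_2^2)$. The genuine gap comes immediately after. Because the $a_n$ pick up \emph{different} factors $1/n$, the expansion of $H_3(1)$ no longer has the coefficient structure that produced the factor $c_3(c_3-2c_1c_2+c_1^3)$ in the $\R$ case: the relevant terms in the bracket are $-9234\,c_1^3c_3+972\,c_1c_2c_3-18225\,c_3^2$, and matching $B\,c_3(c_3+\mu c_1c_2+c_1^3)$ against them forces $B=-9234$ and $\mu=-\tfrac{972}{9234}=-\tfrac{2}{19}$, not $\mu=-2$. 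The paper accordingly writes this part as $-8991\,c_3^2-9234\,c_3\bigl(c_3-\tfrac{2}{19}c_1c_2+c_1^3\bigr)$ and applies Lemma~\ref{lem-prok} with $(\mu,\nu)=(-\tfrac{2}{19},1)$, which lies in $D_1$ (not $D_2$). If you force $\mu=-2$ ``exactly as in \eqref{e8}'', you are left with a residual term $-17496\,c_1c_2c_3$ inside the bracket. Estimating it by $17496|c_1||c_2||c_3|\le 17496|c_1|(1-|c_1|^2)\le 17496\cdot\tfrac{2}{3\sqrt{3}}\approx 6734$, or folding it into the two-variable optimization, pushes the second case of your split well above the target value $1166400\cdot\tfrac{3537}{129600}=31833$ for the bracket (e.g.\ near $|c_1|^2=0.2$, $|c_2|=0.8$ one already exceeds $33000$), so the stated constant is not reachable along the route you describe with the tools you list.

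A secondary, non-fatal discrepancy: the case split must also be re-tuned, since the $c_4$-term is now $2592(8|c_2|+|c_1|^2)|c_4|$ rather than $72(2|c_2|+|c_1|^2)|c_4|$. The paper splits at $|c_2|\le\tfrac{21}{32}(1-|c_1|^2)$, so that $8|c_2|+|c_1|^2\le\tfrac14(21-17|c_1|^2)$, and this is precisely what produces the constant $13608+9234+8991=31833$, i.e.\ the bound $\tfrac{3537}{129600}$. Your threshold $\tfrac12(1-|c_1|^2)$ would actually still work (the ``all remaining terms are nonpositive'' argument goes through and even gives a smaller case-one constant), but only after the decomposition is corrected to $\mu=-\tfrac{2}{19}$; with that single correction, the rest of your outline does recover the theorem.
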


\medskip

\begin{proof}
Similarly as in the proof of the previous theorem, for each function $f$ from $\R_1$, there exists a function  $\omega(z)= c_1z+c_2z^2+\cdots$, analytic in $\D$, such that $|\omega(z)|<1$ for all $z$ in $\D$, and
\be\label{eeq}
f'(z)+zf''(z) = \frac{1+\omega(z)}{1-\omega(z)},
\ee
i.e.,
\[  [f'(z)+zf''(z)] [1-\omega(z)] = 1+\omega(z). \]
Equating the coefficients in the previous expression leads to
\be\label{e6-2}
\begin{split}
a_2 &= \frac{c_1}{2},\\
a_3 &= \frac29(c_1^2+c_2),\\
a_4 &= \frac18(c_3+2c_1c_2+c_1^3),\\
a_5 &= \frac{2}{25}(c_4+2c_1c_3+3c_1^2c_2+c_1^4+c_2^2).
\end{split}
\ee
From here, after some calculations we receive
\[
\begin{split}
H_3(1) &=
\frac{1}{1166400}  \left[-1217c_1 ^6-1140  c_1 ^4  c_2 +  13116 c_1^2 c_2 ^2+7936  c_2 ^3-9234 c_1 ^3  c_3  \right.\\
&\quad \left. +972  c_1   c_2   c_3-18225  c_3 ^2  +2592(8c_2-c_1^2)  c_4 \right] \\
&=\frac{1}{1166400}  \Bigg[-8991 c_3 ^2-9234 c_3\left(c_3-\frac{2}{19}c_1c_2+c_1^3\right) -1140  c_1 ^4  c_2 \\
&\quad  +  13116 c_1^2 c_2 ^2 +7936  c_2 ^3  -1217c_1^6 + 2592(8c_2-c_1^2)c_4 \Bigg],
\end{split}
\]
and further
\[
\begin{split}
|H_3(1)| &\le
\frac{1}{1166400}  \Bigg[8991 |c_3|^2 + 9234 |c_3|\left|c_3-\frac{2}{19}c_1c_2+c_1^3\right| +1140  |c_1| ^4  |c_2| \\
& +  13116 |c_1|^2 |c_2| ^2 +7936  |c_2| ^3 +1217|c_1|^6 + 2592(8|c_2|+|c_1|^2)|c_4| \Bigg].
\end{split}
\]

Now, Lemma \ref{lem-prok} for $\mu=-\frac{2}{19}$, $\nu=1$ and $(\mu,\nu)\in D_1$ gives
 $\left|c_3-\frac{2}{19}c_1c_2+c_1^3\right| \le1$  which implies
\[
\begin{split}
|H_3(1)| &\le
\frac{1}{1166400}  \left[9234 |c_3| + 8991 |c_3|^2 + 1140  |c_1| ^4  |c_2| +7936  |c_2| ^3 \right. \\
&\quad \left. +  13116 |c_1|^2 |c_2| ^2+1217|c_1|^6  + 2592(8|c_2|+|c_1|^2)|c_4|\right].
\end{split}
\]

Assume that $|c_2|\leq \tfrac{21}{32} (1-|c_1|^2)$. Hence, $8|c_2|+|c_1|^2\leq \frac14\left(21-17|c_1|^2\right)$. From this inequality and Lemma \ref{lem-carl},
\[
\begin{split}
|H_3(1)| &\le
 \frac{1}{1166400}  \left[ 9234 |c_3| + 8991 |c_3|^2 + 1140  |c_1| ^4  |c_2| +7936  |c_2| ^3\right. \\
&\quad \left.  +  13116 |c_1|^2 |c_2| ^2+1217|c_1|^6  + 648(21-17|c_1|^2)\left(1-|c_1|^2-|c_2|^2\right)\right]
\end{split}
\]
and
\[
\begin{split}
|H_3(1)| &\le
 \frac{1}{1166400}  \left[13608 + 9234 |c_3| +8991 |c_3|^2 + 7936|c_2|^2(|c_2|-1) \right. \\
&\quad  + 7444|c_1|^2(|c_2|^2-1) + 1140|c_1|^2(|c_1|^2|c_2|-1) + 1217|c_1|^2(|c_1|^4-1) \\
&\quad \left.  + 5672|c_2|^2(|c_1|^2-1) -3807|c_1|^2 - 11016|c_1|^2 \left(1-|c_1|^2-|c_2|^2\right)\right] \\
&\le \frac{1}{1166400}  ( 13608 + 9234 |c_3| +8991 |c_3|^2),
\end{split}
\]
since all other terms are less or equal to zero.

The greatest value of the function in brackets is attained for $|c_3|=1$ and it is equal to 31833. In this way we have proven that
\[  |H_3(1)| \le  \frac{31833}{11664000} = \frac{3537}{129600} = 0.02729\ldots   \]
under the condition $|c_2|\leq \tfrac{21}{32} (1-|c_1|^2)$.

Assume now that $\tfrac{21}{32} (1-|c_1|^2) <|c_2|\leq (1-|c_1|^2)$. From Lemma \ref{lem-carl},
\[
\begin{split}
|H_3(1)| &\le
\frac{1}{1166400}  \left[9234 |c_3| + 8991 |c_3|^2 + 1140  |c_1| ^4  |c_2| +7936  |c_2| ^3 +  13116 |c_1|^2 |c_2| ^2\right. \\
&\quad \left. +1217|c_1|^6  + 2592(8|c_2|+|c_1|^2)\left(1-|c_1|^2-|c_2|^2\right)\right].
\end{split}
\]

From the assumption it follows that $8|c_2|+|c_1|^2 > \frac14\left(21-17|c_1|^2\right)$ and
\[
\begin{split}
|H_3(1)| &\le
\frac{1}{1166400} \left[ 9234|c_3|+8991|c_3| ^2  + h_2(|c_1|^2,|c_2|)\right] \\
&\leq \frac{1}{1166400} \left[ 18225  + h_2(|c_1|^2,|c_2|)\right] \,
\end{split}\]
where
\[h_2(x,y) = -12800y^3+10524xy^2+(1140x^2-20736x+20736)y+1217x^3-2592x^2+13608x\]
and $(x,y)\in D$, $D$ is such that $x+\tfrac{21}{32}y>1$, $x+y\leq 1$ and $x\geq 0$.

We shall derive the greatest value of $h_2$ in $E=\{(x,y):x\geq 0, y\geq 0, x+y\leq 1\}$, i.e. in the superset of $D$. Note that
\[\begin{split}
\frac{\partial h_2}{\partial x}
&= 3\left(3508y^2-6912y+760xy+1217x^2-1728x+4536\right) \\
&= 3\left[760(1-x)(1-y)+3076(1-y)^2+484(1-x)^2+216+733x^2+432y^2\right] \\
&\ge 0.
\end{split}\]
It means that the greatest value of $h_2$ is obtained on the boundary of $E$. We have
\[
\begin{split}
g_2(x,0) &=1217x^3+11016x+2592x(1-x)\leq 1217x^3+11016x\leq 12233\ , \\
g_2(0,y) &=20736y-12800y^2\leq \frac{209952}{25} = 8398.08\ldots \ .
\end{split}
\]
Additionally, it is not difficult to show that
\[g_2(x,1-x) =7936+21060x-40164x^2+23401x^3 \leq 12233\ .\]
Hence, in this case,
\[H_3(1) \leq \frac{1}{1166400} \left(18225+12233\right) = \frac{15229}{583200} = 0.02611\ldots \ .\]

Summing up, $|H_3(1)| \le  \frac{3537}{129600}=0.02729\ldots$\ .
\end{proof}

\medskip

\end{document}